\let\sst=\scriptscriptstyle
\def\lien{\mathrel{\mkern-4mu}}
\def\too{\relbar\lien\rightarrow}
\def\tooo{\relbar\lien\relbar\lien\too}
\newtheorem{theorem}{Theorem}[section]
\newtheorem{lemma}{Lemma}[section]
\newtheorem{proposition}{Proposition}[section]
\newtheorem{corollary}{Corollary}[section]
\newtheorem{definition}{Definition}[section]
\newtheorem{remark}{Remark}[section]
\numberwithin{equation}{section}
\def\N{\mathbb{N}}
\def\Q{\mathbb{Q}}
\def\Z{\mathbb{Z}}
\def\F{\mathbb{F}}
\def\Frac#1#2{\hbox{\footnotesize $\displaystyle \frac{#1}{#2}$}}
\def\plus{\displaystyle\mathop{\raise 1.0pt \hbox{$\bigoplus $}}\limits}
\def\prd{\displaystyle\mathop{\raise 2.0pt \hbox{$\prod$}}\limits}
\def\sm{\displaystyle\mathop{\raise 2.0pt \hbox{$\sum$}}\limits}
\let\ds=\displaystyle
\let\wt=\widetilde
\let\ov=\overline
\def\Cl{{\mathcal C}\hskip-2pt{\ell}}
\def\order{\raise1.5pt \hbox{${\scriptscriptstyle \#}$}}
\begin{document}

\markboth{Georges Gras}
{The $p$-adic Kummer--Leopoldt Constant}

\title{The $p$-adic Kummer--Leopoldt Constant  \\  Normalized $p$-adic Regulator}

\author{Georges Gras}

\address{Villa la Gardette, Chemin Ch\^ateau Gagni\`ere
\\ F--38520 Le Bourg d'Oisans, France -- \url{https://www.researchgate.net/profile/Georges_Gras}}
\email{g.mn.gras@wanadoo.fr}

\maketitle

\begin{abstract}
The $p$-adic Kummer--Leopoldt constant $\kappa_{\sst K}$ of a number 
field $K$ is (assuming the Leopoldt conjecture) the least integer $c$ 
such that for all $n \gg 0$, any global unit 
of $K$, which is locally a $p^{n+c}$th power at the $p$-places, 
is necessarily the $p^n$th power of a global unit of~$K$. This constant has been 
computed by Assim \& Nguyen Quang Do using Iwasawa's techniques,
after intricate studies and calculations by many authors. 
We give an elementary $p$-adic proof and an improvement of these 
results, then a class field theory interpretation of $\kappa_{\sst K}$. 
We give some applications (including generalizations of Kummer's 
lemma on regular $p$th cyclotomic fields) and a natural definition of the 
normalized $p$-adic regulator for any $K$ and any $p\geq 2$.
This is done without analytical computations, using only class field theory
and especially the properties of the so-called $p$-torsion 
group ${\mathcal T}_K$ of Abelian $p$-ramification theory over $K$.
\end{abstract}

\begin{altabstract}
La constante $p$-adique de Kummer--Leopoldt $\kappa_{\sst K}$ d'un corps de nombres
$K$ est (sous la conjecture de Leopoldt) le plus petit entier $c$ tel que pour tout $n \gg 0$,
toute unit\'e globale de $K$, qui est localement une puissance $p^{n+c}$-i\`eme en les
$p$-places, est n\'ecessairement puissance $p^n$-i\`eme d'une unit\'e globale de $K$.
Cette constante a \'et\'e calcul\'ee par Assim \& Nguyen Quang Do en utilisant les techniques 
d'Iwasawa, apr\`es des \'etudes et calculs complexes par divers auteurs.
Nous donnons une preuve $p$-adique \'el\'ementaire et une g\'en\'eralisation 
de ces r\'esultats, puis une interpr\'etation corps de classes de $\kappa_{\sst K}$.
Nous donnons certaines applications (dont des g\'en\'eralisations du lemme de Kummer
sur les $p$-corps cyclotomiques r\'eguliers) et une d\'efinition naturelle du r\'egulateur
$p$-adique normalis\'e pour tous $K\  \& \  p \geq 2$. 
Ceci est fait sans calculs analytiques, en utilisant uniquement le corps de 
classes et tout sp\'ecialement les propri\'et\'es du fameux $p$-groupe de torsion 
${\mathcal T}_K$ de la th\'eorie de la $p$-ramification Ab\'elienne sur $K$.
\end{altabstract}

\keywords{Global units; $p$-adic regulator; Leopoldt conjecture; class field theory;
Abelian $p$-ramification; cyclotomic fields.}

\subjclass{Mathematics Subject Classification 2010: 11R27, 11R37, 11R29}

\section{Notations}\label{section1} Let $K$ be a number field and let
$p\geq 2$ be a prime number; we denote by ${\mathfrak p} \mid p$ 
the prime ideals of $K$ dividing $p$. 
Consider the group $E_K$ of $p$-principal global 
units of $K$ (i.e., units $\varepsilon \equiv 1 \! \pmod{ \prod_{{\mathfrak p} \mid p} {\mathfrak p}}$),
so that the index of $E_K$ in the group of units is prime to $p$.
For each ${\mathfrak p} \mid p$, let $K_{\mathfrak p}$ be the ${\mathfrak p}$-completion
of $K$ and $\ov {\mathfrak p}$ the corresponding prime ideal of the ring of integers 
of $K_{\mathfrak p}$; then let 

\smallskip
\centerline{$U_K := \Big \{u \in \plus_{{\mathfrak p} \mid p}K_{\mathfrak p}^\times, \ \,
u = 1+x, \  x \in \plus_{{\mathfrak p} \mid p} \ov {\mathfrak p} \Big\}\, \ \ \& \ \ \,
W_K := {\rm tor}_{\Z_p}^{}(U_K)$,} 

\smallskip\noindent
the $\Z_p$-module of principal local units at $p$ and its  torsion subgroup.

\smallskip
The $p$-adic logarithm ${\rm log}$ is defined on $1+x$, 
$x \in \plus_{{\mathfrak p} \mid p} \ov {\mathfrak p}$, 
by means of the series ${\rm log} (1+ x) = 
\sm_{i \ge 1}\, (-1)^{i+1} \, \Frac{x^i}{i} \in \plus_{{\mathfrak p} \mid p}K_{\mathfrak p}$.
Its kernel in $U_K$ is $W_K$ \cite[Proposition 5.6]{Wa1}.

\smallskip
We consider the diagonal embedding $E_K \tooo U_K$ and its natural extension 
$E_K \otimes \Z_p \tooo U_K$ whose image is $\ov E_K$, the topological 
closure of $E_K$ in $U_K$. 

\smallskip
In the sequel, these embeddings shall be understood;
moreover, we assume in this paper that $K$ satisfies the Leopoldt conjecture at $p$, 
which is equivalent to the condition ${\rm rk}_{\Z_p}(\ov E_K) = {\rm rk}_{\Z}(E_K)$
(see, e.g., \cite[\S\,5.5, p.\,75]{Wa1}).

\section{The Kummer--Leopoldt constant}\label{section2}

This notion comes from the Kummer lemma (see, e.g., \cite[Theorem 5.36]{Wa1}), 
that is to say, if the odd prime number $p$ is ``regular'', the cyclotomic field $K=\Q(\mu_p)$ 
of $p$th roots of unity satisfies the following property stated for the whole 
group $E'_K$ of global units of $K$: 

\smallskip
\centerline{\it any $\varepsilon \in E'_K$, congruent to a rational
integer modulo $p$, is a $p$th power in $E'_K$.}

\smallskip
In fact, $\varepsilon \equiv a \pmod p$ with $a \in \Z$, implies 
$\varepsilon^{p-1} \equiv a^{p-1} \equiv 1 \pmod p$. So we shall write the 
Kummer property with $p$-principal units in the more suitable equivalent statement:

\smallskip
\centerline{\it any $\varepsilon \in E_K$, congruent to $1$ modulo $p$, is a $p$th power in $E_K$.}

\medskip
From \cite{A}, \cite{L}, \cite{O}, \cite{S}, \cite{Wa2}, \cite{Wa3} one can study this property 
and its generalizations with various techniques (see the rather intricate history in \cite{AN}). 
Give the following definition from \cite{AN}:

\begin{definition} \label{defkappa}
Let $K$ be a number field satisfying the Leopoldt conjecture at the prime
$p\geq 2$. Let $E_K$ be the group of $p$-principal global units of $K$ and let
$U_K$ be the group of principal local units at the $p$-places.

\noindent
We call Kummer--Leopoldt constant (denoted $\kappa_{\sst K} =: \kappa$), 
the smallest integer $c$ such that the following condition is fulfilled:

\smallskip
\centerline{\it for all $n \gg 0$, any unit $\varepsilon \in E_K$, such that 
$\varepsilon \in U_K^{p^{n+c}}$, is necessarily in $E_K^{p^n}$.}
\end{definition}

\begin{remark}\label{remas}
The {\it existence} of $\kappa$ comes from various classical characterizations 
of Leopoldt's conjecture proved for instance in \cite[Theorem III.3.6.2]{Gr1},
after \cite{S}, \cite{L} and oldest Iwasawa papers.
Indeed, if the Leopoldt conjecture is not satisfied, we can find a sequence
$\varepsilon_n \in E_K \setminus E_K^p$ such that ${\rm log} (\varepsilon_n)
\to 0$ (i.e., $\varepsilon_n \in U_K^{p^m} \!\cdot W_K$, with $m \to \infty$ 
as $n\to\infty$); since $W_K$ is finite, taking a suitable $p$-power of 
$\varepsilon_n$, we see that $\kappa$ does not exist in that case.
\end{remark}

 We shall prove (Theorem \ref{nu}) that in the above definition, the condition
``for all $n \gg 0$'' can be replaced by ``for all $n \geq 0$'', subject to introduce
the group of global roots of unity of $K$ and a suitable statement.

\smallskip
We have the following first $p$-adic result giving $p^\kappa$ under 
the Leopoldt conjecture:

\begin{theorem} \label{thm} 
Denote by $E_K$ the group of $p$-principal global units of $K$, by $U_K$ 
the $\Z_p$-module of principal local units at the $p$-places, and by $W_K$ 
its torsion subgroup.
Let $\kappa_{\sst K}$ be the Kummer--Leopoldt constant (Definition \ref{defkappa}).

\smallskip\noindent
Then $p^{\kappa_{\sst K}}$ is the exponent of the finite group 
${\rm tor}^{}_{\Z_p} \big ({\rm log} (U_K) / {\rm log} (\ov E_K)\big )$, 
where ${\rm log}$ is the $p$-adic logarithm and
$\ov E_K$ the topological closure of $E_K$ in $U_K$ (whence the relation 
${\rm log} (\ov E_K) = \Z_p\, {\rm log} (E_K)$).
\end{theorem}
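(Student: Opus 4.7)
The plan is to pass everything to the $p$-adic logarithm and read $p^{\kappa_{\sst K}}$ off as the exponent $p^t$ of the torsion module $T := {\rm tor}^{}_{\Z_p}\big({\rm log}(U_K)/{\rm log}(\ov E_K)\big)$.

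The first step is to set up a log-dictionary. Under Leopoldt, ${\rm log}(\ov E_K) = \Z_p\,{\rm log}(E_K)$ is a free $\Z_p$-submodule of rank $r = {\rm rk}_\Z(E_K)$ inside the free $\Z_p$-module ${\rm log}(U_K)$ of rank $[K:\Q]$. Fix a free $\Z$-complement $F$ to the (finite, $p$-power) torsion in $E_K$, and a splitting $U_K = V \oplus W_K$ arranged so that $F \subset V$. Then for $\varepsilon \in E_K$ and $n+c$ large enough that $p^{n+c}$ kills $W_K$, one verifies the equivalences
\[
\varepsilon \in U_K^{p^{n+c}} \iff \varepsilon \in F \text{ and } {\rm log}(\varepsilon) \in p^{n+c}\,{\rm log}(U_K),
\]
\[
\varepsilon \in E_K^{p^n} \iff \varepsilon \in F \text{ and } {\rm log}(\varepsilon) \in p^n\,{\rm log}(\ov E_K).
\]
The nontrivial direction in the second equivalence uses that any $y \in \Z_p^r$ with $p^n y \in \Z^r$ already lies in $\Z^r$, so a $p^n$-th root in $\ov E_K$ of an element of $F$ is automatically in $F$.

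With the dictionary in place, the upper bound $\kappa_{\sst K} \le t$ is immediate: if ${\rm log}(\varepsilon) = p^{n+t}\ell$ with $\ell \in {\rm log}(U_K)$, the class $[\ell]$ in the quotient is annihilated by $p^{n+t}$, hence lies in $T$, so $p^t\ell \in {\rm log}(\ov E_K)$ and ${\rm log}(\varepsilon) = p^n(p^t\ell) \in p^n\,{\rm log}(\ov E_K)$. For the lower bound $\kappa_{\sst K} \ge t$ (assuming $t\ge 1$), pick $\ell \in {\rm log}(U_K)$ with $p^t\ell \in {\rm log}(\ov E_K)$ but $p^{t-1}\ell \notin {\rm log}(\ov E_K)$; by density of ${\rm log}(E_K)$ in ${\rm log}(\ov E_K) = \Z_p\,{\rm log}(E_K)$, produce $\varepsilon \in F$ with ${\rm log}(\varepsilon) \equiv p^{n+t-1}\ell \pmod{p^{n+t}\,{\rm log}(\ov E_K)}$. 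Then ${\rm log}(\varepsilon) \in p^{n+t-1}\,{\rm log}(U_K)$, so $\varepsilon \in U_K^{p^{n+t-1}}$; but ${\rm log}(\varepsilon) \in p^n\,{\rm log}(\ov E_K)$ would force, after dividing by $p^n$ in the torsion-free module ${\rm log}(U_K)$, that $p^{t-1}\ell \in {\rm log}(\ov E_K)$, contradicting the choice of $\ell$.

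The main obstacle I anticipate is the bookkeeping with roots of unity in the log-dictionary: one must ensure that for $n + c \gg 0$ the hypothesis $\varepsilon \in U_K^{p^{n+c}}$ automatically kills any torsion component of $\varepsilon$ (using $W_K^{p^{n+c}} = 1$), and one must promote a $p^n$-th root lying in $\ov E_K$ to one in $E_K$. Once the dictionary is clean, both inequalities reduce to short $\Z_p$-linear algebra, and the equality ${\rm log}(\ov E_K) = \Z_p\,{\rm log}(E_K)$ highlighted in the statement is exactly the approximation input that drives the lower bound.
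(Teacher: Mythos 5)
Your overall architecture is the same as the paper's: pass to the $p$-adic logarithm, get the upper bound $\kappa_{\sst K}\le t$ because the exponent $p^t$ of $T$ kills the class of $\log(u)$, and get the lower bound by approximating a class of exact order $p^t$ by a global unit and dividing by $p^n$ in the torsion-free module $\log(U_K)$. Those two endgames are fine. The genuine gap is the ``dictionary'' itself: you posit a splitting $U_K=V\oplus W_K$ ``arranged so that $F\subset V$'', and both bounds lean on it (the forward implication $\varepsilon\in U_K^{p^{n+c}}\Rightarrow\varepsilon\in F$ in the upper bound, and the implication $\log\varepsilon\in p^{n+t-1}\log(U_K)\Rightarrow\varepsilon\in U_K^{p^{n+t-1}}$ in the lower bound, where without the splitting you only get $\varepsilon=u^{p^{n+t-1}}\xi$ with $\xi\in W_K$, and $W_K\cap U_K^{p^m}=W_K^{p^m}$ is trivial for large $m$, so the stray $\xi$ really blocks membership). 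Such a complement of $W_K$ containing $F$ (equivalently its closure $\ov F$) is the kernel of a retraction $U_K\to W_K$ killing $\ov F$, and the obstruction is the failure of purity of $W_K$ in $U_K/\ov F$: if some $f\in\ov F$ happens to satisfy $f=\xi\,u^{p^k}$ with $\xi\in W_K\setminus W_K^{p^k}$, no such retraction exists. (Abstract model: in $\Z/p\oplus\Z_p$ the free line through $(\bar 1,p)$ is contained in no free complement of the torsion.) You give no argument ruling this out, and it is precisely the arithmetic subtlety measured by ${\mathcal W}_K=W_K/\mu_K$ and by the generally non-split extension of Lemma \ref{exact}; the caveat $\zeta\in\mu_K\cap W_K^{p^n}$ in Theorem \ref{nu}\,(i) is a signal that such a clean separation of the global units from $W_K$ is not available in general.

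The good news is that the gap is repairable without the splitting, because Definition \ref{defkappa} only requires $n\gg 0$, and this is essentially what the paper's proof does. For the upper bound: from $\log\varepsilon\in p^n\log(\ov E_K)$ write $\varepsilon=\ov\eta^{\,p^n}\xi$ with $\ov\eta\in\ov E_K$, $\xi\in W_K$; then $\xi\in W_K\cap\ov E_K={\rm tor}_{\Z_p}(\ov E_K)=\mu_K$ (Leopoldt, Corollary \ref{leo}), and since $\varepsilon\in U_K^{p^{n+t}}$ also $\xi\in W_K\cap U_K^{p^n}=W_K^{p^n}=1$ for $p^n\geq\exp(W_K)$; after that, your $\Z_p$-integrality lemma (a $p^n$th root in $\ov E_K\simeq\mu_K\times\Z_p^r$ of an element of $E_K=\mu_K\times\Z^r$ has integral exponents), or the paper's citation of the Leopoldt characterization that a global unit arbitrarily close to $1$ lies in $E_K^{p^n}$, finishes the job. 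For the lower bound, either absorb the possible $W_K$-factor by replacing your test unit $\varepsilon$ with $\varepsilon^{p^s}$, $p^s=\exp(W_K)$ (shifting $n$ accordingly; the contradiction $p^{t-1}\ell\in\log(\ov E_K)$ goes through unchanged), or do as the paper does and build the test unit directly as an explicit $p^{n+\kappa}$th power $u_0'^{\,p^{n+\kappa}}$, which never creates a torsion ambiguity. As written, however, the central structural claim underlying your dictionary is unproven and false for general finitely generated $\Z_p$-modules, so the proof is incomplete at exactly the point where the real difficulty lies.
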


\begin{proof} Let $p^\kappa$ be the exponent of ${\rm tor}^{}_{\Z_p} 
\big ({\rm log} (U_K) / {\rm log} (\ov E_K)\big )$.

\smallskip
(i) ($\kappa$ is suitable). Let $n \gg 0$ and let $\varepsilon \in E_K$ 
be such that 
$$\hbox{$\varepsilon = u^{p^{n+\kappa}}$, $\!u \in U_K$.} $$

\noindent
So ${\rm log}(u)$ is of finite order modulo ${\rm log} (\ov E_K)$ and
${\rm log}(\varepsilon) = p^n \cdot (p^\kappa \cdot {\rm log}(u)) = p^n \cdot  {\rm log}(\ov \eta)$
with $\ov \eta \in \ov E_K$. By definition of $\ov E_K$, we can write in $U_K$, for all $N \gg 0$,
$$\hbox{$\ov \eta = \eta(N)\cdot u_N$, $\ \eta(N) \in E_K$,
$\  u_N \equiv 1\!\! \pmod {p^N}$;}$$

\noindent
we get ${\rm log}(\varepsilon) = p^n \cdot {\rm log}(\eta(N)) +p^n \cdot  {\rm log}(u_N)$ 
giving in $U_K$
$$\hbox{$\varepsilon = \eta(N)^{p^n}\! \cdot u_N^{p^n}\! \cdot \xi_N$, 
$\ \xi_N \in W_K$.} $$

\noindent
But $\xi_N$ is near $1$ (depending on the choice of $n\gg 0$), 
whence $\xi_N=1$ for all~$N$, and
$\varepsilon = \eta(N)^{p^n}\!\! \cdot u'_N$, $\ u'_N \to 1$ as $N\to\infty$;
so $u'_N = \varepsilon  \cdot \eta(N)^{-p^n}$ is a global unit, arbitrary close to $1$, 
hence, because of Leopoldt's conjecture \cite[Theorem III.3.6.2 (iii, iv)]{Gr1}, 
of the form $\varphi_N^{p^n}$ with $\varphi_N \in E_K$ 
(recall that $n$ is large enough, arbitrary, but fixed), giving 
$$\varepsilon = \eta(N)^{p^n} \cdot \varphi_N^{p^n} \in E_K^{p^n}. $$

(ii) ($\kappa$ is the least solution). Suppose there exists an integer $c < \kappa$ 
having the property given in Definition \ref{defkappa}.
Let $u_0 \in U_K$ be such that 
$$\hbox{${\rm log}(u_0)$ {\it is of order} $p^\kappa$ in
${\rm  tor}^{}_{\Z_p} \big ({\rm log} (U_K) / {\rm log} (\ov E_K)\big )$;} $$

\noindent
then ${\rm log}(u_0^{p^\kappa}) = {\rm log}(\ov \varepsilon_0)$, 
$\ov \varepsilon_0 \in \ov E_K$. This is equivalent to
$$\hbox{ $u_0^{p^\kappa} = \ov \varepsilon_0 \cdot \xi_0 = 
\varepsilon(N) \cdot u_N \cdot \xi_0,\ \ \  \varepsilon(N)\in E_K,
\ \,  u_N \equiv 1\!\! \!\pmod {p^N},  \, \ \xi_0 \in W_K$,} $$

\noindent
hence, for any $n\gg0$, $u_0^{p^{n+\kappa}} = \varepsilon(N)^{p^n} \!\!\cdot u_N^{p^n}$.
Taking $N$ large enough, but fixed, we can suppose that 
$u_N = v^{p^{2 \kappa}}$, $v \in U_K$ near $1$;
because of the above relations, ${\rm log}(v)$ is of finite order modulo 
${\rm log} (\ov E_K)$, thus  ${\rm log} (v^{p^\kappa}) \in {\rm log} (\ov E_K)$.
This is sufficient,~for 
$$u'_0 := u_0 \cdot v^{-p^{\kappa}}, $$ 

\noindent
to get ${\rm log}(u'_0)$ of 
{\it order} $p^\kappa$ modulo ${\rm log} (\ov E_K)$. So we can write:
$$\varepsilon(N)^{p^n} = u_0^{p^{n+\kappa}}\cdot u_N^{-p^n} 
= u_0^{p^{n+\kappa}}\cdot (v^{-p^\kappa})^{p^{n+\kappa}}
= u'^{p^{n+\kappa}}_0 \in U_K^{p^{n+(\kappa-c)+c }}, $$

\noindent
but, by assumption on $c$ applied to the global unit
$\varepsilon(N)^{p^n}$, we obtain 
$$\varepsilon(N)^{p^n} = \eta_0^{p^{n+(\kappa-c)}}, \ \ \eta_0 \in E_K; $$

\noindent
thus, the above relation $u'^{p^{n+\kappa}}_0 = 
\varepsilon(N)^{p^n} = \eta_0^{p^{n+(\kappa-c)}}$ yields:
$$p^c \cdot {\rm log}(u'_0) = {\rm log}(\eta_0) \in {\rm log}(E_K), $$

\noindent
which is absurd since ${\rm log}(u'_0)$ is of order $p^\kappa$ modulo
${\rm log}(\ov E_K)$.
\end{proof}

\section{Interpretation of $\kappa_{\sst K}$ --
Fundamental exact sequence}\label{section3}

The following $p$-adic result is valid without any assumption on $K$ and $p$:

\begin{lemma} \label{exact}
We have the exact sequence (from \cite[Lemma 4.2.4]{Gr1}):
$$1\to W_K \big / {\rm tor}_{\Z_p}^{}(\ov E_K) \tooo 
 {\rm tor}_{\Z_p}^{} \big(U_K \big / \ov E_K \big) 
 \mathop {\tooo}^{{\rm log}}  {\rm tor}_{\Z_p}^{}\big({\rm log}\big 
(U_K \big) \big / {\rm log} (\ov E_K) \big) \to 0. $$
\end{lemma}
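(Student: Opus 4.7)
The plan is to derive the sequence from a short diagram chase applied to the exact sequence defining the $p$-adic logarithm, and then to restrict carefully to $\Z_p$-torsion.

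First, since the kernel of ${\rm log}$ on $U_K$ is $W_K$, one has the short exact sequence
$$1 \to W_K \tooo U_K \mathop{\tooo}^{{\rm log}} {\rm log}(U_K) \to 0.$$
Quotienting the middle term by $\ov E_K$ and the right-hand term by ${\rm log}(\ov E_K)$ yields a surjection $U_K/\ov E_K \to {\rm log}(U_K)/{\rm log}(\ov E_K)$. Its kernel consists of the classes $u\cdot \ov E_K$ with ${\rm log}(u) \in {\rm log}(\ov E_K)$; writing ${\rm log}(u) = {\rm log}(\bar\eta)$ with $\bar\eta \in \ov E_K$ gives $u \in W_K \cdot \ov E_K$, so this kernel is $(W_K \cdot \ov E_K)/\ov E_K \simeq W_K/(W_K \cap \ov E_K)$. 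Since $W_K = {\rm tor}_{\Z_p}^{}(U_K)$ and $\ov E_K \subset U_K$, one has $W_K \cap \ov E_K = {\rm tor}_{\Z_p}^{}(\ov E_K)$. This produces the exact sequence
$$1 \to W_K/{\rm tor}_{\Z_p}^{}(\ov E_K) \tooo U_K/\ov E_K \mathop{\tooo}^{{\rm log}} {\rm log}(U_K)/{\rm log}(\ov E_K) \to 0,$$
whose leftmost term is already finite.

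Next I would restrict to $\Z_p$-torsion. The inclusion at the left and exactness in the middle are automatic, because $W_K/{\rm tor}_{\Z_p}^{}(\ov E_K)$ is finite (hence lies entirely in the torsion of $U_K/\ov E_K$) and because ${\rm tor}_{\Z_p}^{}$ is left exact. The step requiring actual input is the surjectivity onto ${\rm tor}_{\Z_p}^{}\bigl({\rm log}(U_K)/{\rm log}(\ov E_K)\bigr)$: given a torsion class represented by $u \in U_K$, there is some $k\geq 0$ and $\bar\eta \in \ov E_K$ with $p^k\,{\rm log}(u) = {\rm log}(\bar\eta)$, so $u^{p^k}\bar\eta^{-1} \in \ker({\rm log}) = W_K$; since $W_K$ is finite, raising to the $|W_K|$-th power yields $u^{p^k |W_K|} \in \ov E_K$, so $u\cdot \ov E_K$ is itself torsion in $U_K/\ov E_K$ and projects to the given class.

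The main (and essentially only) obstacle is this last surjectivity, since taking $\Z_p$-torsion is not a right-exact functor in general; it works here precisely because $\ker({\rm log}) = W_K$ is finite, which lets us cancel the correcting root of unity at the cost of a bounded further $p$-power. Combining the short exact sequence above with the surjectivity just established delivers the stated sequence.
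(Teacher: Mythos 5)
Your proof is correct and follows essentially the same route as the paper's: both identify the kernel using $\ker(\log)=W_K$ (so that $\log u\in\log(\ov E_K)$ forces $u\in W_K\cdot\ov E_K$, giving $W_K/{\rm tor}_{\Z_p}(\ov E_K)$), and both obtain surjectivity onto the torsion of the log-quotient by absorbing the correcting element of the finite $p$-group $W_K$ via a further $p$-power so that the representative $u$ is itself torsion modulo $\ov E_K$. Your packaging through the untorsioned short exact sequence plus left exactness of ${\rm tor}_{\Z_p}$ is just a slightly more systematic presentation of the same two verifications.
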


\begin{proof} The surjectivity comes from the fact that if
$u \in U_K$ is such that $p^n {\rm log}(u) = {\rm log}(\ov\varepsilon)$, 
$\ov\varepsilon \in \ov E_K$, then $u^{p^n} = \ov\varepsilon \cdot \xi$ 
for $\xi \in W_K$, hence there exists $m\geq n$ such that $u^{p^m} \in \ov E_K$, 
whence $u$ gives a preimage in ${\rm tor}_{\Z_p}^{} \big(U_K \big / \ov E_K \big)$.

\smallskip
If $u \in U_K$ is such that ${\rm log}(u) \in {\rm log}(\ov E_K)$, then
$u = \ov \varepsilon \cdot \xi$ as above, giving the kernel equal to
$\ov E_K \cdot W_K /\ov E_K = W_K/ {\rm tor}_{\Z_p}^{}(\ov E_K)$.
\end{proof}

\begin{corollary}\label{leo} Let $\mu_K$ be the group of 
global roots of unity of $p$-power order of $K$. 

\noindent
Then, under the Leopoldt conjecture for $p$ in $K$, we have
${\rm tor}_{\Z_p}^{}(\ov E_K) = \mu_K$; thus in that case
$W_K \big / {\rm tor}_{\Z_p}^{}(\ov E_K) = W_K /\mu_K$.
\end{corollary}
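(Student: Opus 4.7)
The plan is to reduce the claim to the identification $\ov E_K \simeq E_K \otimes \Z_p$ under Leopoldt's conjecture, after which the torsion subgroup is read off. I carry this out in three steps.

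First, I determine ${\rm tor}(E_K)$ directly. A root of unity $\zeta\in K$ of order prime to $p$ lying in $E_K$ must satisfy $\zeta\equiv 1\pmod{\mathfrak p}$ for every $\mathfrak{p}\mid p$; but distinct prime-to-$p$ roots of unity have distinct images in the residue field at $\mathfrak{p}$ (since $x^m-1$ is separable modulo $p$ when $\gcd(m,p)=1$), forcing $\zeta=1$. Conversely, every $p$-power root of unity $\zeta\in K$ satisfies $\zeta-1\in\mathfrak{p}$ for all $\mathfrak{p}\mid p$, since the norm of $\zeta-1$ is a power of $p$. Therefore ${\rm tor}(E_K)=\mu_K$, and since $\mu_K$ is a finite $p$-group this gives ${\rm tor}_{\Z_p}(E_K\otimes\Z_p)=\mu_K$.

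Next, I show that the canonical surjection $E_K\otimes\Z_p \to \ov E_K$ induced by the diagonal embedding is an isomorphism. Both modules have $\Z_p$-rank $r:={\rm rk}_{\Z}(E_K)$ --- the target by the Leopoldt hypothesis --- so the kernel $N$ of the surjection is $\Z_p$-torsion and therefore lies inside the torsion subgroup of the source, namely $\mu_K$. But the restriction of the map to $\mu_K$ is just the diagonal embedding of $\mu_K\subset K^\times$ into the product of completions $\plus_{\mathfrak{p}\mid p}K_{\mathfrak p}^\times$, which is injective. Hence $N\cap\mu_K=0$, so $N=0$.

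Finally, the resulting isomorphism $\ov E_K\simeq E_K\otimes\Z_p$ identifies torsion subgroups, yielding ${\rm tor}_{\Z_p}(\ov E_K)=\mu_K$; the second equality of the corollary is then immediate substitution into the left-hand term of the exact sequence of Lemma~\ref{exact}. The only delicate point is the passage from Leopoldt's rank equality to the injectivity of $E_K\otimes\Z_p \to U_K$; this is a classical equivalent formulation of the conjecture (cf.\ \cite[Theorem III.3.6.2]{Gr1}), so no serious obstacle is expected.
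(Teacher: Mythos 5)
Your argument is correct, and it is genuinely more self-contained than the paper's treatment: the paper disposes of this corollary by citation alone (to \cite[Corollary III.3.6.3]{Gr1} and \cite[D\'ef.\ 2.11, Prop.\ 2.12]{Ja}), whereas you rederive it directly from the rank formulation ${\rm rk}_{\Z_p}(\ov E_K)={\rm rk}_{\Z}(E_K)$ of Leopoldt's conjecture, which is exactly the form the paper adopts in Section \ref{section1}. Your key step --- the surjection $E_K\otimes\Z_p\to\ov E_K$ has torsion (hence finite) kernel by rank additivity, and that kernel meets ${\rm tor}_{\Z_p}(E_K\otimes\Z_p)=\mu_K$ trivially because $\mu_K$ injects diagonally into $\plus_{{\mathfrak p}\mid p}K_{\mathfrak p}^\times$ --- is sound, and it isolates the real content of the corollary, namely that no local roots of unity outside $\mu_K$ can arise as limits of global units; the final remark deferring injectivity to \cite[Theorem III.3.6.2]{Gr1} is in fact unnecessary, since your rank-plus-torsion argument already proves it. Two small points of hygiene: first, in showing $\mu_K\subseteq E_K$, the phrase ``the norm of $\zeta-1$ is a power of $p$'' does not by itself give $\zeta-1\in{\mathfrak p}$ for \emph{every} ${\mathfrak p}\mid p$ (an element of norm $p$ could lie in only one prime above $p$); the correct justification is that $1-\zeta$ generates the unique (totally ramified) prime above $p$ in $\Q(\mu_{p^k})$, so every ${\mathfrak p}\mid p$ of $K$ contains it. Second, your determination of ${\rm tor}(E_K)$ treats the prime-to-$p$ and $p$-power cases separately; a mixed-order root of unity is handled by the same two observations (or one can simply note that prime-to-$p$ torsion dies under $\otimes\,\Z_p$, making that step superfluous). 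Neither point affects the validity of the proof. What the paper's route buys is brevity and consistency with the cited framework; what yours buys is a transparent, elementary verification usable without consulting \cite{Gr1} or \cite{Ja}.
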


\begin{proof}
From \cite[Corollary III.3.6.3]{Gr1}, \cite[D\'efinition 2.11, Proposition 2.12]{Ja}.
\end{proof}

Put 
$$\hbox{${\mathcal W}_K:= W_K /\mu_K\ \ $ \& $\ \ {\mathcal R}_K := 
{\rm tor}^{}_{\Z_p} \big ({\rm log} (U_K) / {\rm log} (\ov E_K)\big)$. }$$

Then the exact sequence of Lemma \ref{exact} becomes:
$$1\too {\mathcal W}_K \tooo 
\  {\rm tor}_{\Z_p}^{} \big(U_K \big / \ov E_K \big) 
 \mathop {\tooo}^{{\rm log}} {\mathcal R}_K \too 0.$$

Consider the following diagram (see \cite{Gr1}, \S\,III.2, (c), Fig.\,2.2), 
under the Leopoldt conjecture for $p$ in $K$:
\unitlength=0.84cm 
$$\vbox{\hbox{\hspace{-2.8cm} 
 \begin{picture}(11.5,5.9)
\put(6.5,4.50){\line(1,0){1.3}}
\put(8.75,4.50){\line(1,0){2.0}}
\put(3.85,4.50){\line(1,0){1.4}}
\put(9.1,4.15){\footnotesize$\simeq\! {\mathcal W}_K$}
\put(4.2,2.50){\line(1,0){1.25}}

\bezier{350}(3.8,4.8)(7.6,6.6)(11.0,4.8)
\put(7.2,5.8){\footnotesize${\mathcal T}_K$}

\bezier{350}(6.3,4.7)(8.6,5.5)(10.8,4.7)
\put(8.2,5.25){\footnotesize${\mathcal T}'_K$}
\put(3.50,2.9){\line(0,1){1.25}}
\put(3.50,0.9){\line(0,1){1.25}}
\put(5.7,2.9){\line(0,1){1.25}}

\bezier{300}(3.9,0.5)(4.7,0.5)(5.6,2.3)
\put(5.2,1.3){\footnotesize$\simeq \! \Cl_K$}

\bezier{300}(6.3,2.5)(8.5,2.6)(10.8,4.3)
\put(8.4,2.7){\footnotesize$\simeq \! U_K/\ov E_K$}

\put(10.85,4.4){$H_K^{\rm pr}$}
\put(5.3,4.4){$\widetilde K\! H_K$}
\put(7.9,4.4){$H_K^{\rm reg}$}
\put(6.7,4.14){\footnotesize$\simeq\! {\mathcal R}_K$}
\put(3.3,4.4){$\widetilde K$}
\put(5.5,2.4){$H_K$}
\put(2.7,2.4){$\widetilde K \!\cap \! H_K$}
\put(3.4,0.40){$K$}
\end{picture}   }} $$
\unitlength=1.0cm

\noindent
where $\widetilde K$ is the compositum of the $\Z_p$-extensions, 
$\Cl_K$ the $p$-class group, $H_K$ the $p$-Hilbert class field, $H_K^{\rm pr}$ 
the maximal Abelian $p$-ramified (i.e., unramified outside~$p$) pro-$p$-extension, 
of $K$.
These definitions are given in the ordinary sense when $p=2$ (so that the real infinite 
places of $K$ are not complexified (= are unramified) in the class fields under consideration 
which are ``real'').

\smallskip
By class field theory, ${\rm Gal}(H_K^{\rm pr} / H_K) \simeq U_K/\ov E_K$ in 
which the image of ${\mathcal W}_K$ fixes $H_K^{\rm reg}$,
the Bertrandias--Payan field, ${\rm Gal}(H_K^{\rm reg} / \widetilde K)$ being 
then the Bertrandias--Payan module, except possibly if $p=2$ in 
the ``special case'' (cf. \cite{AN} about the calculation of $\kappa$ 
and the R\'ef\'erences in \cite{Gr2} for some history about this module). 

\smallskip
But ${\mathcal R}_K$ giving $\kappa_{\sst K}$ has, a priori, nothing to do with
{\it the definition} of the Bertrandias--Payan module associated with $p^r$-cyclic 
extensions of $K$, $r \geq 1$, which are embeddable in cyclic $p$-extensions of $K$ 
of arbitrary large degree.

\smallskip
Then we put
${\mathcal T}'_K := {\rm  tor}^{}_{\Z_p} ({\rm Gal}(H_K^{\rm pr} / H_K)) 
\subseteq {\mathcal T}_K :=  {\rm  tor}^{}_{\Z_p}({\rm Gal}(H_K^{\rm pr}/K))$. 
The group ${\mathcal R}_K$ is then isomorphic to 
${\rm Gal}(H_K^{\rm reg} / \widetilde KH_K)$.
Of course, for $p \geq p_0$ (explicit), ${\mathcal W}_K = [H_K : \wt K \cap H_K] = 1$,
whence ${\mathcal R}_K = {\mathcal T}_K$.
We shall see in the Section \ref{reg} that ${\mathcal R}_K \simeq 
{\mathcal T}'_K / {\mathcal W}_K$ is closely 
related to the classical $p$-adic regulator of $K$.

\begin{corollary} Under the Leopoldt conjecture for $p$ in $K$,
the Kummer--Leopoldt constant $\kappa_{\sst K}$ of $K$ is $0$ if and only if
${\mathcal R}_K=1$ (i.e., $H_K^{\rm reg} = \widetilde K H_K$).
\end{corollary}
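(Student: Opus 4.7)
The plan is essentially to assemble two facts that have already been established in the preceding material and to observe that, for a finite $p$-group, having exponent $1$ is equivalent to being trivial.

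First, I would invoke Theorem \ref{thm}, which tells us that $p^{\kappa_{\sst K}}$ is precisely the exponent of the finite $\Z_p$-module ${\mathcal R}_K = {\rm tor}^{}_{\Z_p} \big ({\rm log} (U_K) / {\rm log} (\ov E_K)\big)$. Since ${\mathcal R}_K$ is a finite $p$-group, its exponent equals $1$ if and only if ${\mathcal R}_K$ itself is trivial. Hence the equivalence
$$\kappa_{\sst K} = 0 \ \Longleftrightarrow \ p^{\kappa_{\sst K}} = 1 \ \Longleftrightarrow \ {\mathcal R}_K = 1 $$
follows immediately from Theorem \ref{thm}, under the Leopoldt conjecture (which is needed so that $\kappa_{\sst K}$ exists and so that Theorem \ref{thm} applies).

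Second, I would translate the condition ${\mathcal R}_K = 1$ into the class field theoretic statement $H_K^{\rm reg} = \widetilde K H_K$. This is exactly the identification recorded just before the statement of the corollary, namely ${\mathcal R}_K \simeq {\rm Gal}(H_K^{\rm reg} / \widetilde K H_K)$, obtained from the diagram and from the isomorphism ${\rm Gal}(H_K^{\rm pr} / H_K) \simeq U_K/\ov E_K$ together with the fundamental exact sequence of Lemma \ref{exact} (where the ${\mathcal W}_K$-part fixes $H_K^{\rm reg}$). Thus ${\mathcal R}_K = 1$ if and only if the Galois group ${\rm Gal}(H_K^{\rm reg} / \widetilde K H_K)$ is trivial, i.e., $H_K^{\rm reg} = \widetilde K H_K$.

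Combining these two equivalences gives the corollary. There is no real obstacle here: the content of the statement lies entirely in Theorem \ref{thm} and in the class field theoretic interpretation already set up via the diagram; the corollary only packages both into a single clean criterion. The only point worth emphasizing in writing the proof is that the Leopoldt conjecture is used twice, implicitly, both for the existence of $\kappa_{\sst K}$ (cf.\ Remark \ref{remas}) and for the identification ${\rm tor}_{\Z_p}^{}(\ov E_K) = \mu_K$ of Corollary \ref{leo}, which is what allows the quotient ${\mathcal W}_K = W_K/\mu_K$ to appear in the fundamental exact sequence and in the diagram.
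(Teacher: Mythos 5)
Your proposal is correct and follows essentially the same route as the paper: the paper's proof is exactly the observation that Theorem \ref{thm} identifies $p^{\kappa_{\sst K}}$ as the exponent of ${\mathcal R}_K$, combined with the class field theoretic identification ${\mathcal R}_K \simeq {\rm Gal}(H_K^{\rm reg}/\widetilde K H_K)$ already set up before the corollary. Your added remarks on where the Leopoldt conjecture enters are accurate but not a departure from the paper's argument.
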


\begin{proof} From Theorem \ref{thm} using the new terminology 
of the ``algebraic regulator'' ${\mathcal R}_K := 
{\rm tor}^{}_{\Z_p} \big ({\rm log} (U_K) / {\rm log} (\ov E_K)\big)$
whose exponent is $p^\kappa$.
\end{proof}

\begin{corollary}\label{kummer}
 If the prime number $p$ is regular, then $\kappa_{\sst K}=0$
for the field $K = \Q(\mu_p)$ of $p$th roots of unity, and any unit $\varepsilon \in E_K$
such that $\varepsilon \equiv 1 \pmod p$ is in $E_K^p$ (Kummer's lemma).
\end{corollary}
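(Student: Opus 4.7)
The plan is to apply the previous corollary by establishing ${\mathcal R}_K=1$ for $K=\Q(\mu_p)$ with $p$ regular, then to derive the concrete Kummer statement via the refinement of the definition announced in the excerpt.

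For $K=\Q(\mu_p)$ with $p$ regular, I would verify ${\mathcal R}_K=1$ as follows. Leopoldt's conjecture is known for abelian number fields, so it holds for $K$. Regularity means $p\nmid h_K$, so $\Cl_K=1$, hence $H_K=K$; in the diagram this collapses ${\mathcal T}'_K={\mathcal T}_K$. The unique prime $\mathfrak{p}=(1-\zeta_p)$ above $p$ has completion $\Q_p(\zeta_p)$, whose local $p$-torsion is $W_K=\mu_p$, matching the global torsion $\mu_K=\mu_p$, so ${\mathcal W}_K=1$. The exact sequence $1\to{\mathcal W}_K\to{\mathcal T}'_K\to{\mathcal R}_K\to 0$ therefore collapses to ${\mathcal R}_K\simeq{\mathcal T}_K$, and ${\mathcal T}_K=1$ at a regular prime — a consequence of $\Cl_K=1$ together with the triviality of the $p$-part of the $p$-adic regulator of $K$, equivalent to regularity via Leopoldt's reflection theorem. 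The previous corollary then yields $\kappa_{\sst K}=0$.

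For Kummer's lemma, let $\varepsilon\in E_K$ with $\varepsilon\equiv 1\pmod p$. I would apply the refined form of Definition~\ref{defkappa} announced as Theorem~\ref{nu} (valid for all $n\geq 0$ modulo $\mu_K$) at $n=1$ and $\kappa_{\sst K}=0$, to produce a representation $\varepsilon=\zeta\eta^p$ with $\zeta\in\mu_p$ and $\eta\in E_K$; the local hypothesis ``$\varepsilon\in\mu_K\cdot U_K^p$'' needed to invoke the theorem is extracted from $\varepsilon\equiv 1\pmod p$ via the $p$-adic logarithm together with the $\Z_p$-saturation of ${\rm log}(\ov E_K)$ in ${\rm log}(U_K)$, itself a consequence of ${\mathcal R}_K=1$. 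Now $\eta\in 1+\mathfrak{p}$ forces $\eta^p\in 1+\mathfrak{p}^p=1+p\mathfrak p\subset 1+p\mathcal{O}_{\mathfrak p}$, so $\eta^p\equiv 1\pmod p$; combined with $\varepsilon\equiv 1\pmod p$, this gives $\zeta\equiv 1\pmod p$. Since $v_{\mathfrak p}(\zeta_p-1)=1<p-1=v_{\mathfrak p}(p)$ for $p\geq 3$, the only possibility is $\zeta=1$, and therefore $\varepsilon=\eta^p\in E_K^p$.

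The main obstacle is the first step: proving ${\mathcal T}_K=1$ for regular $p$ is where the regularity hypothesis genuinely acts, and without invoking Leopoldt--Gras reflection (or equivalent Iwasawa-theoretic input) one would need a direct calculation of the $p$-adic regulator of the maximal real subfield $\Q(\mu_p)^+$. The root-of-unity descent in the second step is comparatively routine, the only delicate ingredient being the translation between ``$\varepsilon\equiv 1\pmod p$'' and the local condition required by the refined Theorem~\ref{nu}.
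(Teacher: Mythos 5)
Your first step (reducing to ${\mathcal T}_K=1$) is essentially the paper's route: with $\Cl_K=1$ and ${\mathcal W}_K=1$ the exact sequence gives ${\mathcal R}_K\simeq{\mathcal T}_K$, and the paper gets ${\mathcal T}_K=1$ either from the $p$-rationality of $\Q(\mu_p)$ for regular $p$ \cite{GJ} or from the reflection-type rank formula ${\rm rk}_p({\mathcal T}_K)={\rm rk}_p(\Cl_K^{S_K\rm res})+\order S_K-1={\rm rk}_p(\Cl_K)$. Your phrasing (``triviality of the $p$-part of the $p$-adic regulator, equivalent to regularity'') is close to assuming what is being proved, but the input you point to (reflection) is the right one, so this half is fine in substance.

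The genuine gap is the translation from $\varepsilon\equiv 1\pmod p$ to the local hypothesis of Theorem \ref{nu}, which you attribute to ``the $p$-adic logarithm together with the $\Z_p$-saturation of ${\rm log}(\ov E_K)$ in ${\rm log}(U_K)$''. That mechanism cannot work. In $K_{\mathfrak p}=\Q_p(\mu_p)$ one has $(p)=\ov{\mathfrak p}^{\,p-1}$, so the congruence only gives $v_{\mathfrak p}(\varepsilon-1)\geq p-1$; on the other hand $(1+\ov{\mathfrak p})^p\subseteq 1+\ov{\mathfrak p}^{\,p}$ and $\mu_p\cap(1+\ov{\mathfrak p}^{\,2})=1$, so in fact $U_K^p=1+\ov{\mathfrak p}^{\,p+1}$ and an element of $1+p\mathcal O_{\mathfrak p}$ lies in $\mu_K\cdot U_K^p$ only if $v_{\mathfrak p}(\varepsilon-1)\geq p+1$: two more units of valuation than the hypothesis provides, and a generic element of $1+p\mathcal O_{\mathfrak p}$ is \emph{not} in $\mu_p U_K^p$. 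Saturation of ${\rm log}(\ov E_K)$ in ${\rm log}(U_K)$ (i.e.\ ${\mathcal R}_K=1$) concerns elements of ${\rm log}(U_K)$ whose $p$-multiple falls into ${\rm log}(\ov E_K)$; it says nothing about the $\mathfrak p$-valuation of ${\rm log}(\varepsilon)$ and cannot produce the missing gain. What actually produces it is the arithmetic of the unit itself: the paper's step (i) writes $\varepsilon=1+\alpha p$, reduces to the maximal real subfield $K_0$, and uses ${\rm N}_{K_0/\Q}(\varepsilon)=1$ (equivalently, ${\rm log}(\varepsilon)$ is real of trace zero, together with ${\rm Tr}(\mathfrak p_0)\subseteq p\Z_p$) to kill the constant term of $\alpha$, getting $\varepsilon=1+\beta\, p\,\pi_0$ with $v_{\mathfrak p}(p\pi_0)=p+1$, which suffices for $\varepsilon\in U_K^p$ by \cite[Proposition 5.7]{Wa1}. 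This reality/norm step is the real content of Kummer's lemma beyond $\kappa_{\sst K}=0$ and is absent from your proposal; without it you have only proved the implication for units that are already local $p$th powers. (Your final descent killing $\zeta$ is correct once the local condition is in hand; note only that Theorem \ref{nu} needs $\varepsilon\in U_K^{p^{n+\kappa}}$ rather than $\varepsilon\in\mu_K U_K^p$, a harmless adjustment via $\varepsilon\zeta^{-1}\in E_K$.)
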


\begin{proof}
(i) We first prove that if the real unit $\varepsilon$ is congruent to $1$ 
modulo $p$ then it is a $p$th power in $U_K$.
Put $\varepsilon = 1 + \alpha \cdot p$ 
for a $p$-integer $\alpha \in K^\times$. Let $K_0$ be the maximal real subfield of $K$ 
and let $\pi_0$ be an uniformizing parameter of its $p$-completion. 
Put $\alpha = a_0+\beta\cdot \pi_0$ with $a_0 \in [0, p-1]$ and a $p$-integer $\beta$.
Since ${\rm N}_{K_0/\Q}(\varepsilon)=1$, this yields $a_0=0$, whence
$\varepsilon = 1 + \beta \cdot p \cdot \pi_0$. The valuation of $p \cdot \pi_0$,
calculated in $K$, is $p+1$, which is sufficient to get $\varepsilon \in U_K^p$
(use \cite[Proposition 5.7]{Wa1}).

\smallskip
(ii) Then we prove that $\kappa=0$. The cyclotomic field $K=\Q(\mu_p)$ 
is {\it $p$-regular and $p$-rational} in the meaning of \cite[Th\'eor\`eme 
\& D\'efinition 2.1]{GJ}, so ${\mathcal T}_K = 1$ giving $\kappa = 0$.
In other words, $\kappa = 0$ is given by a stronger condition
($p$-rationality of $K$) than ${\mathcal R}_K = 1$.

\smallskip
One may preferably use the general well-known $p$-rank formula
(the $p$-rank ${\rm  rk}^{}_{p}(A)$ of a finite Abelian group 
$A$ is the $\F_p$-dimension of $A/A^p$), valid for any field $K$ under the 
Leopoldt conjecture, when the group $\mu_K$ of $p$th roots of unity is nontrivial 
\cite[Proposition III.4.2.2]{Gr1}:
\begin{equation*}
{\rm  rk}^{}_{p}({\mathcal T}_K) = 
{\rm  rk}^{}_{p}(\Cl_K^{S_K \rm res})+\order S_K -1, 
\end{equation*}

\noindent
where $S_K$ is the set of prime ideals of $K$ above $p$
and $\Cl_K^{S_K \rm res}$ the $S_K$-class group in the restricted sense (when
$p=2$) equal to the quotient of the $p$-class group of $K$ in the restricted sense
by the subgroup generated by the classes of ideals of $S_K$; so for $K=\Q(\mu_p)$,
we immediately get ${\rm  rk}^{}_{p}({\mathcal T}_K) =  
{\rm  rk}^{}_{p}(\Cl_K)$, which is by definition trivial for regular primes.
\end{proof}

\begin{theorem} \label{nu}
Let $\kappa_{\sst K}$ be the Kummer-Leopoldt constant of $K$
(Definition \ref{defkappa}) and let $p^\nu$ be the exponent of 
${\mathcal W}_K = W_K/ \mu_K$, where
$W_K = {\rm  tor}^{}_{\Z_p}(U_K)$ and $\mu_K$ is the group of global roots of 
unity of $K$ of $p$-power order.\,\footnote{In the case $\nu=0$, if $\mu_K=1$,
then $\mu_{K_{\mathfrak p}}^{} = 1\  \forall {\mathfrak p} \in S_K$; 
if $\mu_K \ne 1$, then $S_K = \{{\mathfrak p}\}$ \& $\mu_{K_{\mathfrak p}}^{} = \mu_K^{}$.}

\smallskip
The property defining $\kappa_{\sst K}$ can be improved as follows:

\smallskip
(i)  If $\nu \geq 1$, for all $n \geq 0$, any $\varepsilon \in E_K$ such that 
$\varepsilon \in U_K^{p^{n+\kappa_{\sst K}}}$ is necessarily of the form
$\varepsilon = \zeta \cdot \eta^{p^n}$, with $\zeta \in \mu_K \cap W_K^{p^n}$,
$\eta \in E_K$.

\smallskip
(ii) If $\nu=0$, for all $n \geq 0$, any $\varepsilon \in E_K$ being in $U_K^{p^{n+\kappa_{\sst K}}}$
is necessarily in $E_K^{p^n}$.
\end{theorem}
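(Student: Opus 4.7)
The plan is to adapt the argument of Theorem \ref{thm}(i), but this time to keep track of the root-of-unity correction that was killed off there by the hypothesis ``$n\gg 0$''. The key observation is that Corollary \ref{leo} identifies $W_K\cap\ov E_K$ with $\mu_K$, so this correction is automatically a global root of unity---precisely the element $\zeta$ appearing in part (i).

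Concretely, starting from $\varepsilon = u^{p^{n+\kappa_{\sst K}}}$ with $u\in U_K$, the relation $p^{n+\kappa_{\sst K}}\,{\rm log}(u) = {\rm log}(\varepsilon)\in {\rm log}(\ov E_K)$ shows that ${\rm log}(u)$ is torsion modulo ${\rm log}(\ov E_K)$; since $p^{\kappa_{\sst K}}$ is the exponent of ${\mathcal R}_K$ by Theorem \ref{thm}, we obtain $p^{\kappa_{\sst K}}\,{\rm log}(u)={\rm log}(\ov\eta)$ with $\ov\eta\in\ov E_K$, hence $u^{p^{\kappa_{\sst K}}} = \ov\eta\cdot w$ for some $w\in W_K=\ker({\rm log})$. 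Raising to the $p^n$th power gives the decomposition $\varepsilon = \ov\eta^{p^n}\cdot w^{p^n}$; and since $w^{p^n} = \varepsilon\cdot \ov\eta^{-p^n}$ lies in both $W_K$ and $\ov E_K$, it belongs to $W_K\cap\ov E_K = {\rm tor}_{\Z_p}(\ov E_K) = \mu_K$ by Corollary \ref{leo}. Thus $\zeta := w^{p^n}$ lies in $\mu_K\cap W_K^{p^n}$, exactly as required in (i).

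It then remains to show that $\varepsilon\,\zeta^{-1} = \ov\eta^{p^n}$ lies in $E_K^{p^n}$. Writing $\ov\eta = \lim \eta(N)$ with $\eta(N)\in E_K$, the global unit $\varepsilon\,\zeta^{-1}\,\eta(N)^{-p^n}$ becomes arbitrarily close to $1$ as $N\to\infty$; by the characterization of Leopoldt's conjecture \cite[Theorem III.3.6.2]{Gr1} already invoked in Theorem \ref{thm}, it equals $\varphi_N^{p^n}$ for some $\varphi_N\in E_K$, giving $\varepsilon = \zeta\cdot\eta^{p^n}$ with $\eta = \eta(N)\,\varphi_N\in E_K$. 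This proves (i). For (ii), the hypothesis $\nu = 0$ means $W_K = \mu_K$, so $\zeta = w^{p^n}\in \mu_K^{p^n}\subset E_K^{p^n}$, and absorbing $\zeta$ into the $p^n$th power yields $\varepsilon\in E_K^{p^n}$.

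The conceptual hurdle, compared with Theorem \ref{thm}, is to recognize that the root of unity $w^{p^n}$ appearing in the decomposition is not an obstruction to be eliminated by taking $n$ large, but is instead forced---by the globality of $\varepsilon$, via Corollary \ref{leo}---into $\mu_K$, where it provides exactly the $\zeta$ predicted by the statement. Once this identification is in hand, the descent from $\ov E_K^{p^n}$ to $E_K^{p^n}$ is a direct rerun of the Leopoldt-type argument used in Theorem \ref{thm}, and the result holds uniformly for all $n\geq 0$.
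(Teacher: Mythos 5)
Your proof is correct and follows essentially the same route as the paper's: you use Theorem \ref{thm} to get $p^{\kappa_{\sst K}}\,{\rm log}(u)\in{\rm log}(\ov E_K)$, identify the torsion discrepancy as an element of $\mu_K$ via Corollary \ref{leo} (torsion of $\ov E_K$ equals $\mu_K$ under Leopoldt), and descend from $\ov E_K^{\,p^n}$ to $E_K^{p^n}$ by the same Leopoldt characterization. The only difference is organizational: you extract $w\in W_K$ before approximating $\ov\eta$ by global units, so $\zeta=w^{p^n}\in \mu_K\cap W_K^{p^n}$ is immediate, whereas the paper fixes the root of unity $\xi_N$ along a subsequence and verifies $\zeta\in W_K^{p^n}$ only at the end from the fact that $\varepsilon$ is a local $p^n$th power.
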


\begin{proof} Let $n \geq 0$.
Suppose that $\varepsilon = u^{p^{n+\kappa}}$, $u \in U_K$. 
So ${\rm log}(\varepsilon) = p^n \cdot p^\kappa \cdot {\rm log}(u)=
p^n \cdot  {\rm log}(\ov \eta)$, $\ov \eta \in \ov E_K$; thus
$\ov \eta = \eta(N)\cdot u_N$, with $\eta(N) \in E_K$,
$u_N \equiv 1\!\pmod {p^N}$, for all $N \gg 0$,
and ${\rm log}(\varepsilon) = p^n \cdot {\rm log}(\eta(N)) +p^n \cdot  {\rm log}(u_N)$ 
giving in $U_K$ 
$$\hbox{$\varepsilon = \eta(N)^{p^n}\! \cdot u_N^{p^n}\! \cdot \xi_N$, 
$\ \ \xi_N \in W_K$, for all $N \gg 0$.} $$

\noindent
Taking $N$ in a suitable infinite subset of $\N$, we can suppose $\xi_N=\xi$
independent of $N \to \infty$.
Then $\xi = \big(\varepsilon \cdot \eta(N)^{-p^n}\big) \cdot u_N^{-p^n} \in {\rm tor}_{\Z_p}( \ov E_K)$, 
whence $\xi = \zeta \in \mu_K$  because of Leopoldt's conjecture (loc. cit. in proof of Corollary
\ref{leo}). Then

\medskip
\centerline{$u^{p^{n+\kappa}} = \varepsilon = \eta(N)^{p^n}\! \cdot u_N^{p^n}\! \cdot \zeta =
\eta(N)^{p^n}\! \cdot u'_N \cdot \zeta$, $\ \ u'_N (\in E_K) \to 1$ as $N \to \infty$,}

\medskip \noindent
whence $\varepsilon $ of the form $\eta(N)^{p^n}\! \cdot \varphi_N^{p^n}\! \cdot \zeta$, 
$\varphi_N \in E_K$,  for $N \gg 0$.
So $\varepsilon = \zeta \cdot \eta^{p^n}$, with $\eta \in E_K$ and
$\zeta = \varepsilon \cdot \eta^{-p^n} \in \mu_K \cap W_K^{p^n}$,
since $\varepsilon$ is a local $p^n$th power.

\smallskip
If $\nu=0$, $W_K = \mu_K$ and $\zeta \in \mu_K \cap W_K^{p^n} =\mu_K^{p^n}$
is a $p^n$th power.
\end{proof}

\section{Remarks and applications}

As above, we assume the Leopoldt conjecture for $p$ in the fields under consideration.

\smallskip\noindent
({\bf a}) The condition $\varepsilon \in U_K^{p^{n+\kappa}} = 
\plus_{{\mathfrak p} \mid p} U_{\mathfrak p}^{p^{n+\kappa}}$,
where $U_{\mathfrak p} := 1 + \ov{\mathfrak p}$, may 
be translated, in the framework of Kummer's lemma,
into a less precise condition of the form $\varepsilon \equiv 1 
\pmod {\prod_{{\mathfrak p} \mid p}{\mathfrak p}^{m_{\mathfrak p}(n, \kappa)}}$
for suitable minimal exponents $m_{\mathfrak p}(n, \kappa)$ giving 
local $p^{n+\kappa}$th powers. This was used by most of the cited 
references with $p$-adic analytical calculations using the fact that
$\order {\mathcal T}_K$ is, roughly speaking, a product
``class number'' $\times$ ``regulator'' from $p$-adic 
$L$-functions, giving an upper bound for $\kappa$
(it is the analytic way used in \cite{Wa2} and \cite{O} to generalize 
Kummer's lemma when $p$ is not regular).

\smallskip \noindent
({\bf b}) If ${\mathcal T}_K = 1$ (in which case $\kappa=0$),
the field $K$ is said to be a {\it $p$-rational field}
(see \cite[\S\,IV.3]{Gr1}, \cite{GJ}, \cite{JN}, \cite{MN}).
Then in any {\it $p$-primitively ramified} $p$-extension $L$ of $K$
(definition and examples in \cite[\S\,IV.3, (b); \S\,IV.3.5.1]{Gr1}, 
after \cite[Theorem 1, \S\,II.2]{Gr4}), 
we get ${\mathcal T}_L = 1$ whence $\kappa_{\sst L}=0$. 

\smallskip
The following examples illustrate this principle:

\smallskip
(i) The $p^m$-cyclotomic fields.
The above applies for the fields $K_m := \Q(\mu_{p^m})$ of $p^m$-roots of unity
when the prime $p$ is regular, since we have seen that ${\mathcal T}_{\Q(\mu_p)} = 1$. 

\smallskip
 (ii) Some $p$-rational $p$-extensions of $\Q$ ($p=2$ and $p=3$).
The following fields have a Kummer-Leopoldt constant $\kappa=0$ 
(\cite[Example IV.3.5.1]{Gr1}, after \cite[\S\,III]{Gr4}):

\medskip
-- The real Abelian 2-extensions of $\Q$, subfields of the fields
$\Q(\mu^{}_{2^\infty}) \cdot \Q(\sqrt{\ell}\,)$, $\ell \equiv 3 \pmod 8$, and
$\Q(\mu^{}_{2^\infty})\cdot  \Q\bigg (\hbox {$\sqrt{\sqrt{\ell}\ \Frac{a-\sqrt{\ell}}{2}}$}\,
\bigg), \  \ell = a^2+ 4\, b^2 \equiv 5 \pmod 8$.

\smallskip
-- The real Abelian $3$-extensions of $\Q$, subfields of the fields
$\Q(\mu^{}_{3^\infty}) \cdot k^{}_{\ell}$, $\ell \equiv 4,\ 7 \pmod 9$,
where $k^{}_{\ell}$ is the cyclic cubic field of conductor $\ell$.

\medskip \noindent
({\bf c}) When $\mu_K=1$, the formula giving 
${\rm  rk}^{}_{p}({\mathcal T}_K)$, used in the 
proof of Kummer's lemma (Corollary  \ref{kummer}), must be 
replaced by a formula deduced from the ``reflection theorem'': let 
$K' := K(\zeta_p)$, where $\zeta_p$ is a primitive $p$th root of unity; then
$${\rm rk}^{}_{p}({\mathcal T}_K) = {\rm rk}^{}_{\omega}
\big(\Cl_{K'}^{S_{K'}{\rm res}}\big) + \sm_{{\mathfrak p} \mid p} \delta_{\mathfrak p} - \delta , $$

\noindent
which links the $p$-rank of ${\mathcal T}_K$
to that of the $\omega$-component of the $p$-group of $S_{K'}$-ideal classes
of the field $K'$, where $\omega$ is the Teichm\"uller character of 
${\rm Gal}(K'/K)$, $\delta_{\mathfrak p} := 1$ or $0$ according as the completion 
$K_{\mathfrak p}$ contains $\zeta_p$ or not, $\delta := 1$ or $0$ according as 
$\zeta_p \in K$ or not (so that $\omega = 1$ if and only if $\zeta_p \in K$).

\smallskip \noindent
({\bf d}) Unfortunately, $p^\kappa$ may be less than $\order {\mathcal R}_K$
(hence a fortiori less than $\order {\mathcal T}_K$) due 
to the unknown {\it group structure} of ${\mathcal R}_K$; as usual, when $K/\Q$ is 
Galois with Galois group $G$, the study of its $G$-structure may give more precise 
information:

\smallskip
Indeed, to simplify assume $p>2$ unramified in $K$, so that
${\rm log}(U_K)$ is isomorphic to $O_K$, the direct sum of 
the rings of integers of the $K_{\mathfrak p}$, ${\mathfrak p} \mid p$; 
if $\eta =: 1+p \cdot \alpha \in E_K$ generates a sub-$G$-module 
of $E_K$, of index prime to $p$ (such a unit does exist since 
$E_K \otimes \Q$ is a monogenic $\Q[G]$-module; cf. 
\cite[Corollary I.3.7.2 \& Remark I.3.7.3]{Gr1}), the structure of 
${\mathcal R}_K$ can be easily deduced from the knowledge of 
$P(\alpha) \equiv \frac{1}{p}{\rm log}(\eta)$ modulo a suitable power of $p$, 
where $P(\alpha)$ is a rational polynomial expression of $\alpha$
generating a sub-$G$-module of $O_K$; 
thus many numerical examples may be obtained.

\smallskip\noindent
({\bf e}) We have given in \cite[\S\,8.6]{Gr3} a conjecture saying that, in 
any fixed number field $K$, we have ${\mathcal T}_K=1$ for all $p\gg 0$, 
giving conjecturally $\kappa = 0$ for all $p\gg 0$.

\section{Normalized $p$-adic regulator of a number field}\label{reg}

The previous Section \ref{section3} shows that the good notion of $p$-adic regulator 
comes from the expression of the $p$-adic finite group ${\mathcal R}_K$ associated 
with the class field theory interpretation of ${\rm Gal}(H_K^{\rm reg} / \widetilde KH_K)$.

\smallskip
For this, recall that $\ov E_K$ is the topological closure, in the $\Z_p$-module $U_K$
of principal local units at $p$, of the group of $p$-principal global units of $K$, and 
${\rm log}$ the $p$-adic logarithm:

\begin{definition} Let $K$ be any number field and let $p \geq 2$ be any prime number.
Under the Leopoldt conjecture for $p$ in $K$, we call 
${\mathcal R}_K := {\rm tor}^{}_{\Z_p} \big ({\rm log} (U_K) / 
{\rm log} (\ov E_K)\big)$ (or its order $\order {\mathcal R}_K$) 
the {\it normalized $p$-adic regulator} of $K$. 
\end{definition}

We have in the simplest case of {\it totally real} number fields 
(from Coates's formula \cite[Appendix]{C} and also \cite[Remarks III.2.6.5]{Gr1} for $p=2$):

\begin{proposition} \label{real}
For any totally real number field $K \ne \Q$, we have, under
the Leopoldt conjecture for $p$ in $K$,
$$\order {\mathcal R}_K \sim \frac{1}{2} \cdot
\frac{\big(\Z_p : {\rm log}({\rm N}_{K/\Q}(U_K)) \big)}
{ \order {\mathcal W}_K \cdot \prod_{{\mathfrak p} \mid p}{\rm N} {\mathfrak p}}
\cdot \frac {R_K}{\sqrt {D_K}}, $$

\noindent
where $\sim$ means equality up to a $p$-adic 
unit factor, where $R_K$ is the usual $p$-adic regulator \cite[\S\,5.5]{Wa1} and 
$D_K$ the discriminant of $K$. 
\end{proposition}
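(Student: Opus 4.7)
The plan is to translate Coates's $p$-adic class number formula for totally real fields, which computes $\order \mathcal{T}_K$, into the stated identity for $\order \mathcal{R}_K$, by passing through the class field theoretic description of $\mathcal{R}_K$ developed in Section~\ref{section3}. Under the Leopoldt conjecture for a totally real $K$, the field $\widetilde K$ is the unique (cyclotomic) $\Z_p$-extension of $K$, so the diagram of Section~\ref{section3} is ``one-dimensional'' on the left and all the intermediate indices in it are computable.

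First, I would use Lemma~\ref{exact} together with the subdiagram $K\subset H_K\cap\widetilde K \subset H_K\subset \widetilde K H_K \subset H_K^{\rm reg}\subset H_K^{\rm pr}$ to write
$$\order \mathcal{T}_K \;=\; [\widetilde K H_K : \widetilde K]\cdot \order \mathcal{T}'_K \;=\; [\widetilde K H_K : \widetilde K]\cdot \order \mathcal{W}_K\cdot \order \mathcal{R}_K.$$
Next, via Artin reciprocity at $p$, I would identify $[\widetilde K H_K : \widetilde K]=[H_K : H_K\cap\widetilde K]$ with $h_K\cdot(\Z_p:{\rm log}({\rm N}_{K/\Q}(U_K)))^{-1}$, up to a $p$-adic unit: the image of $\Cl_K$ in ${\rm Gal}(\widetilde K/K)\simeq\Z_p$ is exactly the cokernel of the local norm from $U_K$ to the cyclotomic local unit group at $p$, by compatibility of the global and local reciprocity maps. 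Finally I would substitute Coates's formula \cite[Appendix]{C},
$$\order \mathcal{T}_K \;\sim\; \frac{1}{2}\cdot \frac{h_K\cdot R_K}{\sqrt{D_K}\cdot \prd_{\mathfrak p\mid p}{\rm N}\mathfrak p},$$
and solve for $\order \mathcal{R}_K$; the factor $h_K$ cancels against $[\widetilde K H_K:\widetilde K]^{-1}$, producing the claimed formula with the normalizing factor $(\Z_p:{\rm log}({\rm N}_{K/\Q}(U_K)))$ in the numerator.

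The main obstacle is the precise bookkeeping of the local contributions at $p$: one must track carefully how $h_K$, the norm cokernel $(\Z_p:{\rm log}({\rm N}_{K/\Q}(U_K)))$, and the index $[H_K\cap\widetilde K : K]$ fit together, being rigorous about what ``$\sim$'' may absorb versus what has to appear explicitly (in particular, the factors $({\rm N}\mathfrak p -1)$ arising when passing between $U_K$ and its quotient by the torsion $W_K$ are absorbed into the $p$-adic unit). For $p=2$, the ``special case'' highlighted in \cite{AN} and the convention of Section~\ref{section3} that real infinite places remain unramified force a separate check that the $\frac{1}{2}$ is correctly attributed; this is exactly the content of \cite[Remarks III.2.6.5]{Gr1} cited in the statement.
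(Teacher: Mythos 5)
Your overall strategy (write $\order{\mathcal T}_K=[\widetilde K H_K:\widetilde K]\cdot\order{\mathcal W}_K\cdot\order{\mathcal R}_K$ from the diagram of Section~\ref{section3} and feed in the analytic formula of Coates's appendix) is exactly the route the paper intends -- the paper in fact gives no detailed argument beyond citing \cite[Appendix]{C} and \cite[Remarks III.2.6.5]{Gr1}. The problem is that the two pivotal ``up to a $p$-adic unit'' identities you assert are each false by a nontrivial power of $p$. First, the identification $[H_K\cap\widetilde K:K]\sim\big(\Z_p:\log(\mathrm{N}_{K/\Q}(U_K))\big)$ does not hold: under the reciprocity map the image of $U_K$ in ${\rm Gal}(\widetilde K/K)$ sits inside ${\rm Gal}(\Q_\infty/\Q)\simeq 1+q\Z_p$ ($q=p$, or $4$ if $p=2$), whose $\log$-image is $q\Z_p$ and not $\Z_p$, and one must also divide by $[K\cap\Q_\infty:\Q]$; so the correct statement is $[H_K\cap\widetilde K:K]=\big(\Z_p:\log(\mathrm{N}_{K/\Q}(U_K))\big)\big/\big(q\,[K\cap\Q_\infty:\Q]\big)$. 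Concretely, for $K$ real quadratic with $p\geq 3$ inert, $\widetilde K/K$ is totally ramified at $\mathfrak p$, so $[H_K\cap\widetilde K:K]=1$, while $\mathrm{N}_{K_\mathfrak p/\Q_p}(U_\mathfrak p)=1+p\Z_p$ gives $\big(\Z_p:\log(\mathrm{N}_{K/\Q}(U_K))\big)=p$. Second, the version of ``Coates's formula'' you quote, $\order{\mathcal T}_K\sim\frac12\,h_K R_K/\big(\sqrt{D_K}\prod_{\mathfrak p\mid p}\mathrm{N}\mathfrak p\big)$, is off by the same factor: in the inert real quadratic case the Proposition (or the author's own special case $\order{\mathcal R}_K\sim R_K/p^{[K:\Q]-1}$) forces $\order{\mathcal T}_K\sim h_K R_K/p$, not $h_K R_K/p^2$. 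Since $\sim$ only absorbs $p$-adic units, neither intermediate claim is correct as stated.

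Your final formula nonetheless comes out right because these two misstatements cancel exactly ($h_K/[H_K\cap\widetilde K:K]$ is overestimated by the same $p$-power by which $\order{\mathcal T}_K$ is underestimated), but a proof cannot rest on two unverified assertions that happen to compensate. The entire content of the Proposition is precisely the normalization bookkeeping you defer: you must either quote Coates's result in its exact form (which already carries the factors $q$, $[K\cap\Q_\infty:\Q]$, or equivalently the index $[\widetilde K\cap H_K:K]$, in which case beware of circularity when you then ``solve for'' $\order{\mathcal R}_K$), or carry out the norm/inertia computation of $[\widetilde K\cap H_K:K]$ correctly as above and match it against the correctly normalized residue formula. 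The peripheral points you make (the factors $\mathrm{N}\mathfrak p-1$ being $p$-adic units, the special treatment of $p=2$ via \cite[Remarks III.2.6.5]{Gr1}) are fine.
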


With this expression, we find again classical results obtained by means of 
analytic computations (e.g., \cite[Theorem 6.5]{A}).
In the real Galois case, with $p$ unramified in $K/\Q$, 
we get, as defined in \cite[D\'efinition 2.3]{Gr3}, 
$\ds \order {\mathcal R}_K \sim \frac{R_K}{p^{[K : \Q]-1}}$ for $p\ne 2$ and
$\ds \order {\mathcal R}_K \sim  \frac{1}{2^{d-1}} \frac{R_K}{2^{[K : \Q]-1}}$ for $p=2$, 
where $d$ is the number of prime ideals ${\mathfrak p} \mid 2$ in $K$.

\smallskip\noindent
Of course, $\order {\mathcal R}_K = \order {\rm tor}_{\Z_p}( {\rm log}(U_K))=1$ 
for $\Q$ and any imaginary quadratic field.


\begin{thebibliography}{xx}

\bibitem{A} B. Angl\`es,  {\it Units and norm residue symbol}, Acta Arithmetica 
{\bf 98}(1) (2001), 33--51. \,\url{https://eudml.org/doc/278965}

\bibitem{AN} J. Assim et T. Nguyen Quang Do, {\it Sur la constante de 
Kummer--Leopoldt d'un corps de nombres}, Manuscripta Math. {\bf 115}(1) (2004), 55--72.

\noindent
\url{http://link.springer.com/article/10.1007/s00229-004-0482-9}

\bibitem{C} J. Coates, {\it $p$-adic $L$-functions and Iwasawa's theory}, 
 Algebraic number fields: $L$-functions and Galois properties
 (Proc. Sympos., Univ. Durham, Durham, 1975),  Academic Press, London (1977), 269--353. 

\bibitem{GJ} G. Gras et J-F. Jaulent,  {\it Sur les corps de nombres r\'eguliers}, Math. Z.
{\bf 202} (1989), 343--365. \url{https://eudml.org/doc/174095}

\bibitem{Gr1} G. Gras, {\it Class Field Theory: from theory to practice},
SMM, Springer-Verlag 2003;  second corrected printing 2005.
\url{https://www.researchgate.net/publication/268005797}

\bibitem{Gr2}  G. Gras, {\it Sur le module de Bertrandias--Payan dans une $p$-extension -- 
Noyau de capitulation}, Publ. Math\'ematiques de Besan\c con,
Alg\`ebre et Th\'eorie des Nombres (2016), 25--44. 
\url{http://pmb.cedram.org/item?id=PMB_2016____25_0}

\bibitem{Gr3} G. Gras, {\it Les $\theta$-r\'egulateurs locaux d'un nombre alg\'ebrique :
Conjectures $p$-adiques}, Canadian Journal of Mathematics {\bf 68}(3) (2016), 571--624. 

\noindent
\url{http://dx.doi.org/10.4153/CJM-2015-026-3}

\bibitem{Gr4} G. Gras, {\it  Remarks on $K_2$ of number fields}, Jour. Number Theory {\bf 23}
(1986), 322--335. \url{http://www.sciencedirect.com/science/article/pii/0022314X86900776}

\bibitem{Ja}  J-F. Jaulent, {\it  Th\'eorie $\ell$-adique globale du  corps de classes}, J.
Th\'eorie des Nombres de Bordeaux {\bf 10}(2) (1998), 355--397.

\noindent
\url{http://www.numdam.org/article/JTNB_1998__10_2_355_0.pdf}

\bibitem{JN} J-F. Jaulent et T. Nguyen Quang Do, {\it Corps $p$-rationnels, corps $p$-r\'eguliers et
ramification restreinte}, J. Th\'eorie des Nombres de Bordeaux {\bf 5} (1993), 343--363. 

\noindent
\url{http://www.numdam.org/article/JTNB_1993__5_2_343_0.pdf}

\bibitem{L} F. Lorenz,  {\it Some remarks on Leopoldt's conjecture}, Algebra i Analiz {\bf 10}(6) 
(1998), 144--155; translation in St. Petersburg Math. J. {\bf 10}(6) (1999), 1005--1013.

\noindent
\url{http://mi.mathnet.ru/eng/aa1036}

\bibitem{MN} A. Movahhedi et T. Nguyen Quang Do, {\it Sur l'arithm\'etique 
des corps de nombres $p$-rationnels}, S\'eminaire de Th\'eorie des Nombres, 
Paris 1987--88, Progress in Mathematics Volume 81, 1990, 155--200.

\noindent
\url{https://link.springer.com/chapter/10.1007%2F978-1-4612-3460-9_9}

\bibitem{O} M. Ozaki {\it Kummer's lemma for $\Z_p$-extensions over totally real number fields},
Acta Arithmetica {\bf 81}(1) (1997), 37--43. \url{https://eudml.org/doc/207053}

\bibitem{S} J. Sands,  {\it Kummer's and Iwasawa's version of Leopoldt's conjecture}, Canad.
Math. Bull. {\bf 31}(1) (1988), 338--346. 

\noindent
\url{http://cms.math.ca/openaccess/cmb/v31/cmb1988v31.0338-0346.pdf}

\bibitem{Wa1} L.C. Washington, {\it Introduction to cyclotomic fields}, 
Graduate Texts in Math. 83, Springer enlarged second edition 1997.

\bibitem{Wa2}  L.C. Washington,  {\it Kummer's lemma for prime power cyclotomic fields}, 
Jour. Number Theory {\bf 40} (1992), 165--173.

\noindent
\url{http://www.sciencedirect.com/science/article/pii/0022314X9290037P}

\bibitem{Wa3}  L.C. Washington,  {\it Units of irregular cyclotomic fields},  
Ill. J. Math. {\bf 23} (1979), 635--647.
\url{https://projecteuclid.org/download/pdf_1/euclid.ijm/1256047937}

\end{thebibliography}
\end{document}